\newtheorem{theorem}{Theorem}[section]
\newtheorem{lemma}[theorem]{Lemma}
\newtheorem{corollary}[theorem]{Corollary}
\newtheorem{proposition}[theorem]{Proposition}
\theoremstyle{remark}
\newtheorem{remark}[theorem]{Remark}
\newtheorem*{remark*}{Remark}
\theoremstyle{definition}
\newcommand\realp{\mathop{Re}}
\newcommand\dH{\,d{\mathcal H}^1}
\def\bR{\mathbb{R}}
\def\bC{\mathbb{C}}
\newcommand\cB{\mathcal{B}}
\newcommand\cA{\mathcal{A}}
\newcommand\cF{\mathcal{F}}
\newcommand\cS{\mathcal{S}}
\newcommand\cH{\mathcal{H}}
\newcommand\cV{\mathcal{V}}
\newcommand\bN{\mathbb{N}}
\newcommand{\PhiOmega}[1]{\Phi_\Omega(#1)}
\newcommand{\PhiOm}{\Phi_\Omega}
\newcommand\Aa{{\mathcal{A}_\alpha}}
\numberwithin{equation}{section}
\title{A Faber-Krahn inequality for Wavelet transforms}
\author{Jo\~ao P. G. Ramos and Paolo Tilli}
\begin{document}
\maketitle
\begin{abstract}
For some special window functions $\psi_{\beta} \in H^2(\bC^+),$ we prove that, over all sets $\Delta \subset \bC^+$ of fixed hyperbolic measure $\nu(\Delta),$ the ones over which the Wavelet transform  $W_{\overline{\psi_{\beta}}}$ with window $\overline{\psi_{\beta}}$ concentrates optimally are exactly the discs with respect to the pseudohyperbolic metric of the upper half space.  This answers a question raised by Abreu and D\"orfler in \cite{AbreuDoerfler}.

Our techniques make use of a framework recently developed by F. Nicola and the second author in \cite{NicolaTilli}, but in the hyperbolic context induced by the dilation symmetry of the Wavelet transform. This leads us naturally to use a hyperbolic rearrangement function, as well as the hyperbolic isoperimetric inequality, in our analysis.
\end{abstract}
\section{Introduction}

In this paper, our main focus will be to answer a question by L. D. Abreu and M. D\"orfler \cite{AbreuDoerfler} on the sets which maximise concentration of certain wavelet transforms. 

To that extent, given a fixed function $g \in L^2(\bR),$ the \emph{Wavelet transform} with window $g$ is defined as
\begin{equation}\label{eq:wavelet-transform}
W_gf(x,s) = \frac{1}{s^{1/2}} \int_{\bR} f(t)\overline{ g\left( \frac{t-x}{s}\right) }\, dt,
\quad \forall f \in L^2(\bR).
\end{equation}
This map is well-defined pointwise for each $x \in \bR, s > 0,$ but in fact, it has better properties if we restrict ourselves to certain subspaces of $L^2.$ Indeed, if $f,g$ are so that $\widehat{f},\widehat{g} = 0$ over the negative half line $(-\infty,0),$ then it can be shown that the wavelet transform is an isometric inclusion from $H^2(\bC^+)$ to $L^2(\bC^+,s^{-2} \, dx \, ds).$

This operator has been introduced first by I. Daubechies and T. Paul in \cite{DaubechiesPaul}, where the authors discuss its properties with respect to time-frequency localisation, in comparison to the short-time Fourier transform operator introduced previously by Daubechies in \cite{Daubechies} and Berezin \cite{Berezin}. Together with the short-time Fourier transform, the Wavelet transform has attracted attention of several authors. As the literature of this topic is extremely rich and we could not, by any means, provide a complete account of it here, we mention specially those interested in the problem of obtaining information from a domain from information on its localisation operator - see, for instance, \cite{AbreuDoerfler,AbreuSpeckbacher1, AbreuSpeckbacher2, AbreuGrochRomero, AbreuPerRomero, GroechenigBook, WongWaveletBook} and the references therein.

In this manuscript, we shall be interested in the continuous wavelet transform for certain special window functions, and how much of its mass, in an  $L^2(\bC^+,s^{-2} \, dx \, ds)-$sense, can be concentrated on certain subsets of the upper half space.

To that extent, fix $\beta > 0.$ We then define $\psi_{\beta} \in L^2(\bR)$ to be such that \[
\widehat{\psi_{\beta}}(t) = \frac{1}{c_{\beta}} 1_{[0,+\infty)} t^{\beta} e^{-t},
\]
where one lets $c_{\beta} = \int_0^{\infty} t^{2\beta - 1} e^{-2t} dt = 2^{2\beta -1}\Gamma(2\beta).$ Here, we normalise the Fourier transform as
\[
\widehat{f}(\xi) = \frac{1}{(2\pi)^{1/2}} \int_{\bR} f(t) e^{-it \xi} \, d \xi.
\]
Fix now a subset $\Delta \subset \bC^+$ of the upper half space.
We define then
\[
C_{\Delta}^{\beta} := \sup \left\{ \int_{\Delta} |W_{\overline{\psi_{\beta}}} f(x,s)|^2 \,\frac{ dx \, ds}{s^2} \colon f \in H^2(\bC^+), \|f\|_2 = 1 \right\}.
\]
The constant $C_{\Delta}^{\beta}$ measures, in some sense, the maximal wavelet concentration of order $\beta >0$ in $\Delta$. Fix then $\beta > 0.$ A natural question, in this regard, is that of providing sharp bounds for $C_{\Delta}^{\beta},$ in terms of some quantitative constraint additionally imposed on the set $\Delta.$ This problem has appeared previously in some places in the literature, especially in the context of the short-time Fourier transform \cite{AbreuSpeckbacher1, AbreuSpeckbacher2, NicolaTilli}. For the continuous wavelet transform, we mention, in particular, the paper by L. D. Abreu and M. D\"orfler \cite{AbreuDoerfler}, where the authors pose this question explicitly in their last remark.

The purpose of this manuscript is, as previously mentioned, to solve such a problem, under the contraint that the \emph{hyperbolic measure} of the set $\Delta$, given by
\[
\nu(\Delta) = \int_{\Delta} \frac{dx\, ds}{s^2} < +\infty,
\]
is \emph{prescribed}. This condition arises in particular if one tries to analyse when the localisation operators associated with $\Delta$
\[
P_{\Delta,\beta} f = ( (W_{\overline{\psi_{\beta}}})^{*} 1_{\Delta} W_{\overline{\psi_{\beta}}} ) f
\]
are bounded from $L^2$ to $L^2.$ One sees, by  \cite[Propositions~12.1~and~12.12]{WongWaveletBook}, that
\begin{equation}\label{eq:localisation-operator}
\| P_{\Delta,\beta} \|_{2 \to 2} \le \begin{cases}
													1, & \text{ or } \cr
													 \left(\frac{\nu(D)}{c_{\beta}}\right). & \cr
													\end{cases}
\end{equation}
As we see that
\[
C_{\Delta}^{\beta} = \sup_{f \colon \|f\|_2 = 1} \int_{\Delta} |W_{\overline{\psi_{\beta}}} f(x,s)|^2 \, \frac{ dx \, ds}{s^2} = \sup_{f \colon \|f\|_2 = 1} \langle P_{\Delta,\beta} f, f \rangle_{L^2(\bR)},
\]
we have the two possible bounds for $C_{\Delta}^{\beta},$ given by the two possible upper bounds in \eqref{eq:localisation-operator}. By considering the first bound, one is led to consider the problem of maximising $C_{\Delta}^{\beta}$ over all sets $\Delta \subset \bC^{+},$ which is trivial by taking $\Delta = \bC^+.$

From the second bound, however, we are induced to consider the problem we mentioned before. In this regard, the main result of this note may be stated as follows:

\begin{theorem}\label{thm:main} It holds that
	\begin{equation}\label{eq:first-theorem}
   C_{\Delta}^{\beta} \le C_{\Delta^*}^{\beta},
	\end{equation}
	where $\Delta^* \subset \bC^+$ denotes any pseudohyperbolic disc so that $\nu(\Delta) = \nu(\Delta^*).$ Moreover, equality holds in \eqref{eq:first-theorem} if and only if $\Delta$ is a pseudohyperbolic disc of measure $\nu(\Delta).$
\end{theorem}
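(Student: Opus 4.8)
The plan is to follow the Faber–Krahn strategy adapted to the hyperbolic geometry of the upper half space, exploiting the concrete structure of the window $\psi_\beta$. The key observation is that the wavelet transform $W_{\overline{\psi_\beta}}f$ should, for the optimizer, be expressible in terms of a holomorphic function on $\bC^+$. Indeed, since $\widehat{\psi_\beta}(t) = c_\beta^{-1} 1_{[0,\infty)} t^\beta e^{-t}$, a direct computation of \eqref{eq:wavelet-transform} on the Fourier side should reveal that $F(z) := s^{-\beta-1/2} \,\overline{W_{\overline{\psi_\beta}}f(x,s)}$, with $z = x + is$, is (up to a normalizing constant and a weight) a holomorphic function in the weighted Bergman space on $\bC^+$, so that $|W_{\overline{\psi_\beta}}f(x,s)|^2 = s^{2\beta+1}|F(z)|^2$ with $F$ analytic. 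This is the exact analogue of the Bargmann-transform picture used by Nicola and Tilli in \cite{NicolaTilli}, and establishing this representation cleanly is the first step.

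Granting this, I would introduce the hyperbolic rearrangement. Writing $u(z) = |F(z)|^2$ and recalling $\nu(\Delta) = \int_\Delta s^{-2}\,dx\,ds$, the concentration integral becomes $\int_\Delta s^{2\beta+1}|F|^2\, s^{-2}\,dx\,ds$; the goal reduces to showing that among all $\Delta$ of fixed $\nu(\Delta)$, the supremum of such weighted integrals is maximized by pseudohyperbolic discs. For fixed $F$ the best $\Delta$ is a superlevel set $\{s^{2\beta+1}|F|^2 > \lambda\}$, so the problem becomes an optimization over holomorphic $F$ of the distribution of the weighted density against hyperbolic measure. I would then define the hyperbolic decreasing rearrangement $u^*$ of the density with respect to $\nu$, so that pseudohyperbolic discs correspond to superlevel sets of $u^*$, and reduce \eqref{eq:first-theorem} to a pointwise comparison of the rearranged mass functions.

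The heart of the argument is the subharmonicity/curvature estimate combined with the hyperbolic isoperimetric inequality. Since $\log|F|$ is subharmonic (harmonic away from zeros of $F$), the density $v = \log(s^{2\beta+1}|F|^2)$ satisfies a pointwise differential inequality of the form $\Delta_{\mathrm{hyp}} v \ge -c_\beta$ in the hyperbolic Laplacian, reflecting the concavity that forces the extremal profile. Following the Nicola–Tilli scheme, I would differentiate the distribution function $\mu(t) = \nu(\{v > t\})$, apply the coarea formula together with the hyperbolic isoperimetric inequality $\mathcal{P}_{\mathrm{hyp}}(\Delta) \ge \Phi(\nu(\Delta))$ (where $\mathcal{P}_{\mathrm{hyp}}$ is hyperbolic perimeter and $\Phi$ the isoperimetric profile), and deduce a differential inequality for $\mu$ that is saturated precisely by the profile of a pseudohyperbolic disc. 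Integrating this inequality yields \eqref{eq:first-theorem}, while tracking the equality case in both the isoperimetric inequality (forcing geodesic discs, i.e. pseudohyperbolic discs) and in the subharmonicity estimate (forcing $F$ to have the extremal form) yields the rigidity statement.

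I expect the main obstacle to be the curvature bookkeeping in the hyperbolic setting: unlike the flat Gaussian case, the weight $s^{2\beta+1}$ interacts nontrivially with the hyperbolic metric, and one must verify that the combined density $v$ satisfies exactly the right pointwise inequality to close the argument against the \emph{hyperbolic} (rather than Euclidean) isoperimetric profile. Handling the zeros of $F$, where $\log|F|$ has $-\infty$ singularities contributing positive distributional mass to $\Delta_{\mathrm{hyp}}v$, requires care, but these only help the inequality; the delicate point is confirming that equality propagates to force $F$ zero-free with a single optimal profile, and hence that $\Delta$ must be a genuine pseudohyperbolic disc.
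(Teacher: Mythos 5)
Your proposal follows essentially the same route as the paper: the representation $|W_{\overline{\psi_\beta}}f(x,s)|^2 = s^{2\beta+1}|F(z)|^2$ with $F$ holomorphic is exactly the Bergman transform $B_{2\beta-1}$ used there, and the subsequent steps (hyperbolic rearrangement of the density $s^{2\beta+1}|F|^2$ against $\nu$, coarea formula, the identity $\Delta_{\mathrm{hyp}}\log(s^{2\beta+1}|F|^2)\ge -(2\beta+1)$ with zeros of $F$ only helping, the hyperbolic isoperimetric inequality, the resulting differential inequality for the mass function, and rigidity in the equality case) match the paper's argument point for point. The only cosmetic difference is that you stay in the upper half-plane model while the paper transfers to the Bergman space of the disc via $T_\alpha$, an equivalence the paper itself notes is unitary.
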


The proof of Theorem \ref{thm:main} is inspired by the recent proof of the Faber-Krahn inequality for the short-time Fourier transform, by F. Nicola and the second author \cite{NicolaTilli}. Indeed, in the present case, one may take advantage of the fact that the wavelet transform induces naturally a mapping from $H^2(\bC^+)$ to analytic functions with some decay on the upper half plane. This parallel is indeed the starting point of the proof of the main result in \cite{NicolaTilli}, where the authors show that the short-time Fourier transform with Gaussian window induces naturally the so-called \emph{Bargmann transform}, and one may thus work with analytic functions in a more direct form.

The next steps follow the general guidelines as in \cite{NicolaTilli}: one fixes a function and considers certain integrals over level sets, carefully adjusted to match the measure constraints. Then one uses rearrangement techniques, together with a coarea formula argument with the isoperimetric inequality stemming from the classical theory of elliptic equations, in order to prove bounds on the growth of such quantities.

The main differences in this context are highlighted by the translation of our problem in terms of Bergman spaces of the disc, rather than Fock spaces. Furthermore, we use a rearrangement with respect to a \emph{hyperbolic} measure, in contrast to the usual Hardy--Littlewood rearrangement in the case of the short-time Fourier transform. This presence of hyperbolic structures induces us, further in the proof, to use the hyperbolic isoperimetric inequality. In this regard, we point out that a recent result by A. Kulikov \cite{Kulikov} used a similar idea in order to analyse extrema of certain monotone functionals on Hardy spaces. \\

This paper is structured as follows. In Section 2, we introduce notation and the main concepts needed for the proof, and perform the first reductions of our proof. With the right notation at hand, we restate Theorem \ref{thm:main} in more precise form - which allows us to state crucial additional information on the extremizers of inequality \eqref{eq:first-theorem} - in Section 3, where we prove it. Finally, in Section 4, we discuss related versions of the reduced problem, and remark further on the inspiration for the hyperbolic measure constraint in Theorem \ref{thm:main}. \\

\noindent\textbf{Acknowledgements.} J.P.G.R. would like to acknowledge financial support by the European Research Council under the Grant Agreement No. 721675 ``Regularity and Stability in Partial Differential Equations (RSPDE)''.

\section{Notation and preliminary reductions} Before moving on to the proof of Theorem \ref{thm:main}, we must introduce the notion which shall be used in its proof. We refer the reader to the excellent exposition in \cite[Chapter~18]{WongWaveletBook} for a more detailed account of the facts presented here.

\subsection{The wavelet transform} Let $f \in H^2(\bC^+)$ be a function on the Hardy space of the upper half plane. That is, $f$ is holomorphic on $\bC^+ = \{ z \in \bC \colon \text{Im}(z) > 0\},$ and such that
\[
\sup_{s > 0} \int_{\bR} |f(x+is)|^2 \, dx < +\infty.
\]
Functions in this space may be identified in a natural way with functions $f$ on the real line, so that $\widehat{f}$ has support on the positive line $[0,+\infty].$ We fix then a function $g \in H^2(\bC^+) \setminus \{0\}$ so that
\[
\| \widehat{g} \|_{L^2(\bR^+,t^{-1})}^2 < +\infty.
\]
Given a fixed $g$ as above, the \emph{continuous Wavelet transform} of $f$ with respect to the window $g$ is defined to be
\begin{equation}\label{eq:wavelet-def}
	W_gf(z) = \langle f, \pi_z g \rangle_{H^2(\bC^+)}
\end{equation}
where $z = x + i s,$ and $\pi_z g(t) = s^{-1/2} g(s^{-1}(t-x)).$ From the definition, it is not difficult to see that $W_g$ is an \emph{isometry} from  $H^2(\bC^+)$ to $L^2(\bC^+, s^{-2} \, dx \, ds),$ as long as $\| \widehat{g} \|_{L^2(\bR^+,t^{-1})}^2 = 1.$ \\

\subsection{Bergman spaces on $\bC^+$ and $D$}For every $\alpha>-1$,
the Bergmann space $\Aa(D)$ of the disc is the Hilbert space of all functions
$f:D\to \bC$ which are holomorphic in the unit disk $D$ and are such that
\[
\Vert f\Vert_\Aa^2 := \int_D |f(z)|^2 (1-|z|^2)^\alpha \,dz <+\infty.
\]
Analogously, the Bergman space of the upper half place $\Aa(\bC^+)$ is defined as the set of analytic functions in $\bC^+$ such that
\[
\|f\|_{\Aa(\bC^+)}^2 = \int_{\bC^+} |f(z)|^2 s^{\alpha} \, d\mu^+(z),
\]
where $d \mu^+$ stands for the normalized area measure on $\bC^+.$ These two spaces defined above do not only share similarities in their definition, but indeed it can be shown that they are \emph{isomorphic:} if one defines
\[
T_{\alpha}f(w) = \frac{2^{\alpha/2}}{(1-w)^{\alpha+2}} f \left(\frac{w+1}{i(w-1)} \right),
\]
then $T_{\alpha}$ maps $\Aa(\bC^+)$ to $\Aa(D)$ as a \emph{unitary isomorphism.} For this reason, dealing with one space or the other is equivalent, an important fact in the proof of the main theorem below.

For the reason above, let us focus on the case of $D$, and thus we abbreviate $\Aa(D) = \Aa$ from now on. The weighted $L^2$ norm defining this space is induced by the scalar product
\[
\langle f,g\rangle_\alpha := \int_D f(z)\overline{g(z)} (1-|z|^2)^\alpha\, dz.
\]
Here and throughout, $dz$ denotes the bidimensional Lebesgue measure on $D$.

An orthonormal basis of $\Aa$ is given by the normalized monomials
$ z^n/\sqrt{c_n}$ ($n=0,1,2,\ldots$), where
\[
c_n = \int_D |z|^{2n}(1-|z|^2)^\alpha \,dz=
2\pi \int_0^1 r^{2n+1}(1-r^2)^\alpha\,dr=
\frac{\Gamma(\alpha+1)\Gamma(n+1)}{\Gamma(2+\alpha+n)}\pi.
\]
Notice that
\[
\frac 1 {c_n}=\frac {(\alpha+1)(\alpha+2)\cdots (\alpha+n+1)}{\pi n!}
=\frac{\alpha+1}\pi \binom {-\alpha-2}{n}(-1)^n ,
\]
so that from the binomial series we obtain
\begin{equation}
\label{seriescn} \sum_{n=0}^\infty \frac {x^n}{c_n}=\frac{\alpha+1}\pi (1-x)^{-2-\alpha},\quad x\in D.
\end{equation}
Given $w\in D$, the reproducing kernel relative to $w$, i.e. the (unique) function
$K_w\in\Aa$ such that
\begin{equation}
\label{repker}
f(w)=\langle f,K_w\rangle_\alpha\quad\forall f\in\Aa,
\end{equation}
is given by
\[
K_w(z):=\frac {1+\alpha}\pi (1-\overline{w}z)^{-\alpha-2}=
\sum_{n=0}^\infty \frac{\overline{w}^n z^n}{c_n},\quad z\in D
\]
(the second equality follows from \eqref{seriescn}; note that $K_w\in\Aa$, since
the sequence $\overline{w}^n/\sqrt{c_n}$ of its coefficients w.r.to the monomial
basis belongs to $\ell^2$). To see that \eqref{repker} holds, it suffices to check it when
$f(z)=z^k$ for some $k\geq 0$, but this is immediate from the series representation
of $K_w$, i.e.
\[
\langle z^k,K_w\rangle_\alpha
=\sum_{n=0}^\infty w^n \langle z^k,z^n/c_n\rangle_\alpha=w^k=f(w).
\]
Concerning the norm of $K_w$, we have readily from the reproducing property the following identity concerning their norms:
\[
\Vert K_w\Vert_\Aa^2=\langle K_w,K_w\rangle_\alpha= K_w(w)=\frac{1+\alpha}\pi (1-|w|^2)^{-2-\alpha}.
\]
We refer the reader to \cite{Seip} and the references therein for further meaningful properties in the context of Bergman spaces. 

\subsection{The Bergman transform} Now, we shall connect the first two subsections above by relating the wavelet transform to Bergman spaces, through the so-called \emph{Bergman transform.} For more detailed information, see, for instance \cite{Abreu} or \cite[Section~4]{AbreuDoerfler}. 

Indeed, fix $\alpha > -1.$ Recall that the function $\psi_{\alpha} \in H^2(\bC^+)$ satisfies
\[
\widehat{\psi_{\alpha}} = \frac{1}{c_{\alpha}} 1_{[0,+\infty)} t^{\alpha} e^{-t},
\]
where $c_{\alpha} > 0$ is chosen so that $\| \widehat{\psi_{\alpha}} \|_{L^2(\bR^+,t^{-1})}^2 =1.$ The \emph{Bergman transform of order $\alpha$} is then given by
\[
B_{\alpha}f(z) = \frac{1}{s^{\frac{\alpha}{2} +1}} W_{\overline{\psi_{\frac{\alpha+1}{2}}}} f(-x,s) = c_{\alpha} \int_0^{+\infty} t^{\frac{\alpha+1}{2}} \widehat{f}(t) e^{i z t} \, dx.
\]
From this definition, it is immediate that $B_{\alpha}$ defines an analytic function whenever $f \in H^2(\bC^+).$ Moreover, it follows directly from the properties of the wavelet transform above that $B_{\alpha}$ is a unitary map between $H^2(\bC^+)$ and $\Aa(\bC^+).$

Finally, note that the Bergman transform $B_{\alpha}$ is actually an \emph{isomorphism} between $H^2(\bC^+)$ and $\Aa(\bC^+).$

Indeed, let  $l_n^{\alpha}(x) = 1_{(0,+\infty)}(x) e^{-x/2} x^{\alpha/2} L_n^{\alpha}(x),$ where $\{L_n^{\alpha}\}_{n \ge 0}$ is the sequence of generalized Laguerre polynomials of order $\alpha.$ It can be shown that the function $\psi_n^{\alpha}$ so that
\begin{equation}\label{eq:eigenfunctions}
\widehat{\psi_n^{\alpha}}(t) = b_{n,\alpha} l_n^{\alpha}(2t),
\end{equation}
with $b_{n,\alpha}$ chosen for which $ \|\widehat{\psi_n^{\alpha}}\|_{L^2(\bR^+,t^{-1})}^2=1,$ satisfies
\begin{equation}\label{eq:eigenfunctions-disc}
T_{\alpha} (B_{\alpha}\psi_n^{\alpha}) (w) = e_n^{\alpha}(w).
\end{equation}
Here, $e_n^{\alpha}(w) = d_{n,\alpha} w^n,$ where $d_{n,\alpha}$ is so that $\|e_n^{\alpha}\|_{\Aa} = 1.$ Thus, $T_{\alpha} \circ B_{\alpha}$ is an isomorphism between $H^2(\bC^+)$ and $\Aa(D),$ and the claim follows.
\section{The main inequality}

\subsection{Reduction to an optimisation problem on Bergman spaces} By the definition of the Bergman transform above, we see that
\[
\int_{\Delta} |W_{\overline{\psi_{\beta}}} f(x,s)|^2 \, \frac{ dx \, ds}{s^2} = \int_{\tilde{\Delta}} |B_{\alpha}f(z)|^{2} s^{\alpha} \, dx \, ds,
\]
where $\tilde{\Delta} =\{ z = x + is\colon -x+is \in \Delta\}$ and $\alpha = 2\beta - 1.$ On the other hand, we may further apply the map $T_{\alpha}$ above to $B_{\alpha}f;$ this implies that
\[
\int_{\tilde{\Delta}} |B_{\alpha}f(z)|^{2} s^{\alpha} \, dx \, ds = \int_{\Omega} |T_{\alpha}(B_{\alpha}f)(w)|^2 (1-|w|^2)^{\alpha} \, dw,
\]
where $\Omega$ is the image of $\tilde{\Delta}$ under the map $z \mapsto \frac{z-i}{z+i}$ on the upper half plane $\bC^+.$ Notice that, from this relationship, we have
\begin{align*}
  & \int_{\Omega} (1-|w|^2)^{-2} \, dw  = \int_D 1_{\Delta}\left( \frac{w+1}{i(w-1)} \right) (1-|w|^2)^{-2} \, dw \cr
 & = \frac{1}{4} \int_{\Delta} \frac{ dx \, ds}{s^2} = \frac{\nu(\Delta)}{4}. \cr
\end{align*}

This leads us naturally to consider, on the disc $D$,  the Radon measure
\[
\mu(\Omega):=\int_\Omega (1-|z|^2)^{-2}dz,\quad\Omega\subseteq D,
\]
which is, by the computation above, the area measure in the usual Poincar\'e
model of the hyperbolic space (up to a multiplicative factor 4).  Thus, studying the supremum of $C_{\Delta}^{\beta}$ over $\Delta$ for which $\nu(\Delta) = s$ is equivalent to maximising
\begin{equation}\label{eq:optimal-bergman-object}
R(f,\Omega)=
\frac{\int_\Omega |f(z)|^2 (1-|z|^2)^\alpha \,dz}{\Vert f\Vert_\Aa^2}
\end{equation}
over all $f \in \Aa$ and $\Omega \subset D$ with $\mu(\Omega) = s/4.$

With these reductions, we are now ready to state a more precise version of Theorem \ref{thm:main}.

\begin{theorem}\label{thm:main-bergman} Let $\alpha>-1,$ and $s>0$ be fixed. Among all functions $f\in \Aa$ and among
all measurable sets $\Omega\subset D$ such that $\mu(\Omega)=s$, the quotient $R(f,\Omega)$ as defined in \eqref{eq:optimal-bergman-object} satisfies the inequality
\begin{equation}\label{eq:upper-bound-quotient}
	R(f,\Omega) \le R(1,D_s),
\end{equation}
where $D_s$ is a disc centered at the origin with $\mu(D_s) = s.$ Moreover, there is equality in \eqref{eq:upper-bound-quotient} if and only if $f$ is a multiple of some
reproducing kernel $K_w$ and $\Omega$ is a ball centered at
$w$, such that $\mu(\Omega)=s$.
\end{theorem}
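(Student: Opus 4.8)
The plan is to follow the Nicola--Tilli scheme, but transported to the hyperbolic world of the disc. First I would normalize $\Vert f\Vert_\Aa=1$ and rewrite the numerator of $R(f,\Omega)$ using the hyperbolic measure: since $dz=(1-|z|^2)^2\,d\mu$, setting $u(z):=|f(z)|^2(1-|z|^2)^{\alpha+2}$ we have $\int_\Omega|f|^2(1-|z|^2)^\alpha\,dz=\int_\Omega u\,d\mu$, with the normalization reading $\int_D u\,d\mu=1$. For \emph{fixed} $f$, the bathtub principle shows that the best $\Omega$ with $\mu(\Omega)=s$ is a superlevel set $\{u>t\}$; equivalently, $\sup_{\mu(\Omega)=s}\int_\Omega u\,d\mu=\int_0^s u^*(\sigma)\,d\sigma$, where $u^*$ is the decreasing rearrangement of $u$ with respect to $\mu$. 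Thus the whole problem reduces to controlling this rearrangement profile, and it suffices to show $\int_0^s u^*\le\int_0^s u^*_{\mathrm{ext}}$, where $u_{\mathrm{ext}}$ corresponds to the conjectured extremizer $f\equiv\mathrm{const}$ (whose $u$ is already hyperbolically radial, with superlevel sets equal to the discs $D_\tau$).

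The heart of the argument is a differential inequality for $m(\lambda):=\mu(\{w>\lambda\})$, where $w:=\log u=\log|f|^2+(\alpha+2)\log(1-|z|^2)$. Using that $\log|f|^2$ is harmonic off the zeros of $f$ and the identity $\Delta\log(1-|z|^2)=-4(1-|z|^2)^{-2}$, one gets $\Delta w=4\pi\sum_j m_j\delta_{z_j}-4(\alpha+2)(1-|z|^2)^{-2}$. Crucially, the zeros of $f$ lie \emph{outside} every superlevel set $\{w>\lambda\}$ with $\lambda$ finite, so the divergence theorem yields the clean identity $\int_{\{w=\lambda\}}|\nabla w|\dH=4(\alpha+2)\,m(\lambda)$, while the coarea formula gives $-m'(\lambda)=\int_{\{w=\lambda\}}(1-|z|^2)^{-2}|\nabla w|^{-1}\dH$. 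Introducing the hyperbolic perimeter $P(\lambda)=\int_{\{w=\lambda\}}(1-|z|^2)^{-1}\dH$, Cauchy--Schwarz gives $P(\lambda)^2\le 4(\alpha+2)\,m(\lambda)\,(-m'(\lambda))$, and the hyperbolic isoperimetric inequality (in the normalization of $\mu$) gives $P(\lambda)^2\ge 4\,m(\lambda)\bigl(\pi+m(\lambda)\bigr)$. Combining the two produces the key bound
\[
-m'(\lambda)\ \ge\ \frac{\pi+m(\lambda)}{\alpha+2}.
\]

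To conclude I would compare with the extremizer. A direct computation shows $u_{\mathrm{ext}}$ satisfies the above with equality and has rearrangement profile $t_{\mathrm{ext}}(m)=A(\pi+m)^{-(\alpha+2)}$; rewriting the inequality through $t(m)=u^*(m)=e^{\lambda(m)}$ gives $\tfrac{d}{dm}\log\bigl(t(m)/t_{\mathrm{ext}}(m)\bigr)\ge 0$, i.e. the ratio of the two profiles is nondecreasing. Since both profiles integrate to $1$ over $(0,\infty)$ (rearrangement preserves the total mass), an elementary crossing argument---the difference $t-t_{\mathrm{ext}}$ changes sign at most once, from $-$ to $+$---forces $\int_0^s t\le\int_0^s t_{\mathrm{ext}}$ for every $s$. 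Unwinding the normalization, this is exactly $R(f,\Omega)\le R(1,D_s)$.

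The delicate part will be twofold. First, the rigorous justification of the coarea/divergence computations: handling critical levels via Sard's theorem, the set of zeros, and the decay of $u$ as $|z|\to 1$ so that all boundary terms vanish and $m(\lambda)$ is absolutely continuous. Second---and this is where I expect the main obstacle---the equality analysis: equality must propagate back through every step, forcing isoperimetric equality (so each relevant superlevel set is a geodesic, hence Euclidean, disc), Cauchy--Schwarz equality (so $|\nabla w|$ is constant on level sets), and the absence of zeros of $f$ (otherwise the superlevel sets would carry holes and could not be discs). One then argues that the level sets are \emph{concentric} pseudohyperbolic discs about some $w\in D$, whence $u$ is a function of the pseudohyperbolic distance to $w$; since $f$ is zero-free, $f/K_w$ is holomorphic, nonvanishing, and of constant modulus, hence constant, giving $f=cK_w$ and $\Omega$ a ball centered at $w$ as claimed.
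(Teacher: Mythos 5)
Your argument for the inequality itself is essentially the paper's: the same reduction to $u=|f|^2(1-|z|^2)^{\alpha+2}$ and the bathtub principle, the same three ingredients (Cauchy--Schwarz on level sets, the identity $\Delta\log u=-4(\alpha+2)(1-|z|^2)^{-2}$ fed into the divergence theorem, and the hyperbolic isoperimetric inequality $P^2\ge 4\pi m+4m^2$), and an equivalent concluding comparison: your claim that $t/t_{\mathrm{ext}}$ is nondecreasing is literally the paper's $(h\cdot G')'\ge 0$ with $h(s)=(\pi+s)^{\alpha+2}$, since $h\cdot t_{\mathrm{ext}}$ is constant and $G'=u^*-\theta'=t-t_{\mathrm{ext}}$; your single-crossing argument is just a cleaner packaging of the paper's maximum-principle case analysis. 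The one genuine divergence is the equality case. The paper avoids all rigidity considerations: from $I\equiv\theta$ it reads off $\max u=u^*(0)=I'(0)=\theta'(0)=(1+\alpha)/\pi$ and invokes the pointwise bound $u(w)=|\langle f,K_w\rangle_\alpha|^2(1-|w|^2)^{\alpha+2}\le (1+\alpha)/\pi\,\Vert f\Vert_\Aa^2$, whose equality case immediately forces $f=cK_w$. This is shorter and sidesteps precisely the step you flag as delicate: proving that the superlevel sets are \emph{concentric} hyperbolic discs. Nested discs of prescribed measures need not share a center, and making that step rigorous requires an overdetermined-problem argument of Serrin type (this is exactly what the paper's Remark~1 does, via the Kumaresan--Prajapat theorem on hyperbolic space), so your route to uniqueness is viable but strictly harder; I would replace it with the $\max u$ argument. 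One small point in your favor: you are more explicit than the paper about the zeros of $f$ --- the point masses of $\Delta\log|f|^2$ lie outside every superlevel set because $u$ vanishes at them --- a detail the paper's computation of $\int_{A_s}|\nabla u|\dH$ silently relies on.
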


Note that, in the Poincar\'e disc model in two dimensions, balls in the pseudohyperbolic metric coincide with Euclidean balls, but the Euclidean and hyperbolic centers differ in general, as well as the respective radii. 

\begin{proof}[Proof of Theorem \ref{thm:main-bergman}] Let us begin by computing $R(f,\Omega)$ when $f=1$ and $\Omega=B_r(0)$ for some $r<1$.
\[
R(1,B_r)=\frac {\int_0^r \rho (1-\rho^2)^\alpha\,d\rho}
{\int_0^1 \rho (1-\rho^2)^\alpha\,d\rho}
= \frac {(1-\rho^2)^{1+\alpha}\vert_0^r}
{(1-\rho^2)^{1+\alpha}\vert_0^1}
=1-(1-r^2)^{1+\alpha}.
\]
Since $\mu(B_r)$ is given by
\begin{align*}
\int_{B_r} (1-|z|^2)^{-2}\,dz
 & =2\pi \int_0^r \rho (1-\rho^2)^{-2}\,d\rho \cr
=\pi(1-r^2)^{-1}|_0^r & =\pi\left(\frac{1}{1-r^2}-1\right), \cr
\end{align*}
we have
\[
\mu(B_r)=s \iff \frac 1{1-r^2}=1+\frac s\pi,
\]
so that $\mu(B_r)=s$ implies $R(1,B_r)=1-(1+s/\pi)^{-1-\alpha}.$ The function
\[
\theta(s):=1-(1+s/\pi)^{-1-\alpha},\quad s\geq 0
\]
will be our comparison function, and we will prove that
\[
R(f,\Omega)\leq \theta(s)
\]
for every $f$ and every $\Omega\subset D$ such that $\mu(\Omega)=s$.

Consider any $f\in\Aa$ such that $\Vert f\Vert_\Aa=1$, let
\[
u(z):= |f(z)|^2 (1-|z|^2)^{\alpha+2},
\]
and observe that
\begin{equation}
\label{eq10}
R(f,\Omega)=\int_\Omega u(z)\,d\mu
\leq I(s):=\int_{\{u>u^*(s)\}} u(z) \,d\mu,\quad
s=\mu(\Omega),
\end{equation}
where $u^*(s)$ is the unique value of $t>0$ such that
\[
\mu(\{u>t\})=s.
\]
That is, $u^*(s)$ is the inverse function of the distribution function of $u$,
relative to the measure $\mu$.

Observe that $u(z)$ can be extended to a continuous function on $\overline D$,
by letting $u\equiv 0$ on $\partial D.$

Indeed, consider any $z_0\in D$ such that,
say, $|z_0|>1/2$, and let $r=(1-|z_0|)/2$. Then, on the ball $B_r(z_0)$, for some
universal constant $C>1$ we have
\[
C^{-1} (1-|z|^2) \leq r \leq C(1-|z|^2)\quad\forall z\in B_r(z_0),
\]
so that
\begin{align*}
	\omega(z_0):=\int_{B_r(z_0)} |f(z)|^2 (1-|z|^2)^\alpha \,dz
	\geq C_1 r^{\alpha+2}\frac 1 {\pi r^2}
	\int_{B_r(z_0)} |f(z)|^2 \,dz\\
	\geq C_1 r^{\alpha+2} |f(z_0)|^2
	\geq C_2 (1-|z_0|^2)^{\alpha+2} |f(z_0)|^2=
	C_2 u(z_0).
\end{align*}
Here, we used that fact that $|f(z)|^2$ is subharmonic, which follows from analyticity. Since $|f(z)|^2 (1-|z|^2)^\alpha\in L^1(D)$,
$\omega(z_0)\to 0$ as $|z_0|\to 1$, so that
\[
\lim_{|z_0|\to 1} u(z_0)=0.
\]
As a consequence, we obtain that the superlevel sets $\{u > t\}$ are \emph{strictly} contained in $D$. Moreover, the function $u$ so defined is a \emph{real analytic function}. Thus (see \cite{KrantzParks}) all level sets of $u$ have zero measure, and as all superlevel sets do not touch the boundary, the hyperbolic length of all level sets is zero; that is,
\[
L(\{u=t\}) := \int_{\{u = t\}} (1-|z|^2)^{-1} \, d\mathcal{H}^1 =0, \, \forall \, t > 0.
\]
Here and throughout the proof, we use the notation $\mathcal{H}^k$ to denote the $k-$dimensional Hausdorff measure.  

It also follow from real analyticity that the set of critical points of $u$ also has hyperbolic length zero:
\[
L(\{|\nabla u| = 0\}) = 0.
\]
Finally, we note that a suitable adaptation of the proof of Lemma 3.2 in \cite{NicolaTilli} yields the following result. As the proofs are almost identical, we omit them, and refer the interested reader to the original paper.

\begin{lemma}\label{thm:lemma-derivatives} The function $\varrho(t) := \mu(\{ u > t\})$ is absolutely continuous on $(0,\max u],$ and
	\[
	-\varrho'(t) = \int_{\{u = t\}} |\nabla u|^{-1} (1-|z|^2)^{-2} \, d \mathcal{H}^1.
	\]
	In particular, the function $u^*$ is, as the inverse of $\varrho,$ locally absolutely continuous on $[0,+\infty),$ with
	\[
	-(u^*)'(s) = \left( \int_{\{u=u^*(s)\}} |\nabla u|^{-1} (1-|z|^2)^{-2} \, d \mathcal{H}^1 \right)^{-1}.
	\]

\end{lemma}

Let us then denote the boundary of the superlevel set where $u > u^*(s)$ as
\[
A_s=\partial\{u>u^*(s)\}.
\]
We have then, by Lemma \ref{thm:lemma-derivatives},
\[
I'(s)=u^*(s),\quad
I''(s)=-\left(\int_{A_s} |\nabla u|^{-1}(1-|z|^2)^{-2}\,d{\mathcal H}^1\right)^{-1}.
\]
Since the Cauchy-Schwarz inequality implies
\[
\left(\int_{A_s} |\nabla u|^{-1}(1-|z|^2)^{-2}\,d{\mathcal H}^1\right)
\left(\int_{A_s} |\nabla u|\,d{\mathcal H}^1\right)
\geq
\left(\int_{A_s} (1-|z|^2)^{-1}\,d{\mathcal H}^1\right)^2,
\]
letting
\[
L(A_s):= \int_{A_s} (1-|z|^2)^{-1}\,d{\mathcal H}^1
\]
denote the length of $A_s$ in the hyperbolic metric, we obtain the lower bound
\begin{equation}\label{eq:lower-bound-second-derivative}
I''(s)\geq - \left(\int_{A_s} |\nabla u|\,d{\mathcal H}^1\right)
L(A_s)^{-2}.
\end{equation}
In order to compute the first term in the product on the right-hand side of \eqref{eq:lower-bound-second-derivative}, we first note that
\[
\Delta \log u(z) =\Delta \log (1-|z|^2)^{2 + \alpha}=-4(\alpha+2)(1-|z|^2)^{-2},
\]
which then implies that, letting $w(z)=\log u(z)$,
\begin{align*}
\frac {-1} {u^*(s)} \int_{A_s} |\nabla u|\,d{\mathcal H}^1
 & =
\int_{A_s} \nabla w\cdot\nu \,d{\mathcal H}^1
=
\int_{u>u^*(s)} \Delta w\,dz \cr
=-4(\alpha+2)\int_{u>u^*(s)}
(1-|z|^2)^{-2}
\,dz
& =-4(\alpha+2) \mu(\{u>u^*(s)\})=
-4(\alpha+2)s.\cr
\end{align*}
Therefore,
\begin{equation}\label{eq:lower-bound-second-almost}
I''(s)\geq -4(\alpha+2)s u^*(s)L(A_s)^{-2}=
-4(\alpha+2)s I'(s)L(A_s)^{-2}.
\end{equation}
On the other hand, the isoperimetric inequality on the Poincaré disc - see, for instance, \cite{Izmestiev, Osserman, Schmidt} - implies
\[
L(A_s)^2 \geq 4\pi s + 4 s^2,
\]
so that, pluggin into \eqref{eq:lower-bound-second-almost}, we obtain
\begin{equation}\label{eq:final-lower-bound-second}
I''(s)\geq -4 (\alpha+2)s I'(s)(4\pi s+4 s^2)^{-1}
=-(\alpha+2)I'(s)(\pi+s)^{-1}.
\end{equation}
Getting back to the function $\theta(s)$, we have
\[
\theta'(s)=\frac{1+\alpha}\pi(1+s/\pi)^{-2-\alpha},\quad
\theta''(s)=-(2+\alpha)\theta'(s)(1+s/\pi)^{-1}/\pi.
\]
Since
\[
I(0)=\theta(0)=0\quad\text{and}\quad \lim_{s\to+\infty} I(s)=\lim_{s\to+\infty}\theta(s)=1,
\]
we may obtain, by a maximum principle kind of argument,
\begin{equation}\label{eq:inequality-sizes}
I(s)\leq\theta(s)\quad\forall s>0.
\end{equation}
Indeed, consider $G(s) := I(s) - \theta(s).$ We claim first that $G'(0) \le 0.$ To that extent, notice that
\[
\Vert u\Vert_{L^\infty(D)} = u^*(0)=I'(0) \text{ and }\theta'(0)=\frac{1+\alpha}\pi.
\]
On the other hand, we have, by the properties of the reproducing kernels,
\begin{align}\label{eq:sup-bound}
	u(w)=|f(w)|^2 (1-|w|^2)^{\alpha+2}& =|\langle f,K_w\rangle_\alpha|^2(1-|w|^2)^{\alpha+2}\cr
	\leq \Vert f\Vert_\Aa^2 \Vert K_w\Vert_\Aa^2&  (1-|w|^2)^{\alpha+2}=\frac{1+\alpha}\pi,
\end{align}
and thus $I'(0) - \theta'(0) \le 0,$ as claimed. Consider then
\[
m := \sup\{r >0 \colon G \le 0 \text{ over } [0,r]\}.
\]
Suppose $m < +\infty.$ Then, by compactness, there is a point $c \in [0,m]$ so that $G'(c) = 0,$ as $G(0) = G(m) = 0.$ Let us first show that $G(c)<0$ if $G \not\equiv 0.$

In fact, we first define the auxiliary function $h(s) = (\pi + s)^{\alpha + 2}.$ The differential inequalities that $I, \, \theta$ satisfy may be combined, in order to write
\begin{equation}\label{eq:functional-inequality}
(h \cdot G')' \ge 0.
\end{equation}
Thus, $h\cdot G'$ is increasing on the whole real line. As $h$ is increasing on $\bR,$ we have two options:
\begin{enumerate}
	\item either $G'(0) = 0,$ which implies, from \eqref{eq:sup-bound}, that $f$ is a multiple of the reproducing kernel $K_w.$ In this case, It can be shown that $G \equiv 0,$ which contradicts our assumption;
	\item or $G'(0)<0,$ in which case the remarks made above about $h$ and $G$ imply that $G'$ is \emph{increasing} on the interval $[0,c].$ In particular, as $G'(c) =0,$ the function $G$ is \emph{decreasing} on $[0,c],$ and the claim follows.
\end{enumerate}

Thus, $c \in (0,m).$ As $G(m) = \lim_{s \to \infty} G(s) = 0,$ there is a point $c' \in [m,+\infty)$ so that $G'(c') = 0.$ But this is a contradiction to \eqref{eq:functional-inequality}: notice that $0 = G(m) > G(c)$ implies the existence of a point $d \in (c,m]$ with $G'(d) > 0.$ As $h \cdot G'$ is increasing over $\bR,$ and $(h \cdot G')(c) = 0, \, (h \cdot G')(d) > 0,$ we cannot have $(h \cdot G') (c') = 0.$ The contradiction stems from supposing that $m < +\infty,$ and \eqref{eq:inequality-sizes} follows.

With \eqref{eq:upper-bound-quotient} proved, we now turn our attention to analysing the equality case in Theorem \ref{thm:main-bergman}. To that extent, notice that, as a by-product of the analysis above, the inequality \eqref{eq:inequality-sizes} is \emph{strict} for every $s>0,$ unless $I\equiv\theta$. Now assume that
$I(s_0)=\theta(s_0)$ for some $s_0>0$, then $\Omega$ must coincide (up to a negligible set)
with $\{u>u^*(s_0)\}$ (otherwise we would have strict inequality in
\eqref{eq10}), and moreover $I\equiv \theta$, so that
\[
\Vert u\Vert_{L^\infty(D)} = u^*(0)=I'(0)=\theta'(0)=\frac{1+\alpha}\pi.
\]
By the argument above in \eqref{eq:sup-bound}, this implies that the $L^\infty$ norm of $u$ on $D$, which is equal to $(1+\alpha)/\pi$,
 is attained at some $w\in D$, and since equality is achieved, we obtain that $f$ must be a multiple of the reproducing kernel $K_w$, as desired. This concludes the proof of Theorem \ref{thm:main-bergman}.
\end{proof}

\noindent\textbf{Remark 1.} The uniqueness part of Theorem \ref{thm:main-bergman} may also be analysed through the lenses of an overdetermined problem. In fact, we have equality in that result if and only if we have equality in \eqref{eq:final-lower-bound-second}, for almost every $s > 0.$ If we let $w = \log u$, then a quick inspection of the proof above shows that \begin{align}\label{eq:serrin-disc}
	\begin{cases}
	\Delta w  = \frac{-4(\alpha+2)}{(1-|z|^2)^2} &  \text { in }  \{u > u^*(s)\}, \cr
	 w  = \log u^*(s), & \text{ on } A_s, \cr
	 |\nabla w| = \frac{c}{1-|z|^2}, & \text{ on } A_s. \cr
	\end{cases}
\end{align}
By mapping the upper half plane $\mathbb{H}^2$ to the Poincar\'e disc by $z \mapsto \frac{z-i}{z+i},$ one sees at once that a solution to \eqref{eq:serrin-disc} translates into a solution of the Serrin overdetermined problem
\begin{align}\label{eq:serrin-upper-half}
	\begin{cases}
		\Delta_{\mathbb{H}^2} v  = c_1 &  \text { in }  \Omega, \cr
		v = c_2 & \text{ on } \partial\Omega, \cr
		|\nabla_{\mathbb{H}^2} v| = c_3 & \text{ on } \partial\Omega, \cr
	\end{cases}
\end{align}
where $\Delta_{\mathbb{H}^2}$ and $\nabla_{\mathbb{H}^2}$ denote, respectively, the Laplacian and gradient in the upper half space model of the two-dimensional hyperbolic plane. By the main result in \cite{KumaresanPrajapat}, the only domain $\Omega$ which solves \eqref{eq:serrin-upper-half} is a geodesic disc in the upper half space, with the hyperbolic metric. Translating back, this implies that $\{u>u^*(s)\}$ are (hyperbolic) balls for almost all $s > 0.$ A direct computation then shows that $w = \log u,$ with $u(z) = |K_w(z)|^2(1-|z|^2)^{\alpha+2},$ is the unique solution to \eqref{eq:serrin-disc} in those cases.   \\

\noindent\textbf{Remark 2.} Theorem \ref{thm:main-bergman} directly implies, by the reductions above, Theorem \ref{thm:main}. In addition to that, we may use the former to characterise the extremals to the inequality \eqref{eq:first-theorem}.

Indeed, it can be shown that the reproducing kernels $K_w$ for $\Aa(D)$ are the image under $T_{\alpha}$ of the reproducing kernels for $\Aa(\bC^+),$ given by
\[
\mathcal{K}_{w}^{\alpha}(z) = \kappa_{\alpha} \left( \frac{1}{z-\overline{w}} \right)^{\alpha+2},
\]
where $\kappa_{\alpha}$ accounts for the normalisation we used before. Thus, equality holds in \eqref{eq:first-theorem} if and only if $\Delta$ is a pseudohyperbolic disc, and moreover, the function $f \in H^2(\bC^+)$ is such that
\begin{equation}\label{eq:equality-Bergman-kernel}
B_{2\beta-1}f(z) = \lambda_{\beta} \mathcal{K}^{2\beta - 1}_w(z),
\end{equation}
for some $w \in \bC^+.$ On the other hand, it also holds that the functions $\{\psi^{\alpha}_n\}_{n \in \bN}$ defined in \eqref{eq:eigenfunctions} are so that $B_{\alpha}(\psi_0^{\alpha}) =: \Psi_0^{\alpha}$ is a \emph{multiple} of $\left(\frac{1}{z+i}\right)^{\alpha+2}.$ This can be seen by the fact that $T_{\alpha}(\Psi_0^{\alpha})$ is the constant function.

 From these considerations, we obtain that $f$ is a multiple of $\pi_{w} \psi_0^{2\beta-1},$ where $\pi_w$ is as in \eqref{eq:wavelet-def}. In summary, we obtain the following:

 \begin{corollary} Equality holds in Theorem \ref{thm:main} if an only if $\Delta$ is a pseudohyperbolic disc with hyperbolic center $w = x + i y,$ and $$f(t) = c \cdot \frac{1}{y^{1/2}}\psi_0^{2\beta-1} \left( \frac{t-x}{y}\right),$$
 for some $c \in \mathbb{C} \setminus \{0\}.$
 \end{corollary}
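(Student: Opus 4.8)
The plan is to read off the corollary from the equality statement in Theorem~\ref{thm:main-bergman} by transporting it backward along the chain of unitary maps $T_\alpha\circ B_\alpha$ that produced the reduced problem; this is precisely the program sketched in Remark~2, which I would now make precise. Throughout I set $\alpha=2\beta-1$. By Theorem~\ref{thm:main-bergman}, equality in \eqref{eq:first-theorem} holds if and only if the transported function $T_\alpha B_\alpha f\in\Aa(D)$ is a nonzero multiple of a reproducing kernel $K_w$ and the transported domain $\Omega\subset D$ is a ball centered at the same $w\in D$. I would then unwind the two conditions --- on the domain and on the function --- separately, checking at the end that the point $w$ governing the function and the point $w$ governing the domain coincide.

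For the domain, I would use that the two maps relating $\Omega$ and $\Delta$, namely the Cayley map $z\mapsto (z-i)/(z+i)$ and the reflection $x\mapsto -x$ implicit in $\tilde\Delta$, are isometries of the hyperbolic structure and preserve the measures $\mu$ and $\nu$ (up to the fixed factor $4$ recorded in Section~3). Since pseudohyperbolic balls are exactly the images of origin-centered balls under hyperbolic isometries, the condition ``$\Omega$ is a ball centered at $w$'' is equivalent to ``$\Delta$ is a pseudohyperbolic disc,'' and pushing $w$ forward through the inverse Cayley map and the reflection identifies the hyperbolic center of $\Delta$ with a point $x+iy\in\bC^+$.

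For the function, the key step, I would first rewrite the kernel condition on $\bC^+$: since $K_w=T_\alpha\mathcal K^\alpha_w$ up to a constant (Remark~2), the relation $T_\alpha B_\alpha f\propto K_w$ is equivalent to $B_\alpha f\propto \mathcal K^\alpha_w$, which is \eqref{eq:equality-Bergman-kernel}. It then remains to identify the $H^2(\bC^+)$-preimage of $\mathcal K^\alpha_w$ under the unitary $B_\alpha$. Here I would run the covariance computation underlying Remark~2 explicitly. Starting from the base identity $B_\alpha\psi_0^\alpha\propto (z+i)^{-(\alpha+2)}\propto\mathcal K^\alpha_i$ (verified in Remark~2 by checking that $T_\alpha$ sends $(z+i)^{-(\alpha+2)}$ to a constant, i.e. to the reproducing kernel of $\Aa(D)$ at the origin), I would apply $\pi_w$ with $w=x+iy$ and use $\widehat{\pi_w g}(t)=y^{1/2}e^{-ixt}\widehat g(yt)$ together with the Fourier representation of $B_\alpha$. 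With the ground-state profile $\widehat{\psi_0^\alpha}(t)\propto t^{(\alpha+1)/2}e^{-t}$ this reduces the claim to a single $\Gamma$-integral,
\[
B_\alpha(\pi_w\psi_0^\alpha)(z)\propto \int_0^{+\infty} t^{\alpha+1}e^{(iz-ix-y)t}\,dt \propto \big(y-i(z-x)\big)^{-(\alpha+2)}=(-i)^{-(\alpha+2)}(z-\overline w)^{-(\alpha+2)}\propto \mathcal K^\alpha_w(z),
\]
so that $\pi_w\psi_0^\alpha$ is, up to a scalar, the $B_\alpha$-preimage of $\mathcal K^\alpha_w$. Since $B_\alpha$ is injective, \eqref{eq:equality-Bergman-kernel} forces $f=c\,\pi_w\psi_0^{2\beta-1}$ for some $c\in\bC\setminus\{0\}$, and unwinding $\pi_w g(t)=y^{-1/2}g\big((t-x)/y\big)$ from \eqref{eq:wavelet-def} yields exactly the claimed formula.

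The main obstacle is precisely this covariance identity: showing that the affine ($ax+b$) orbit $\{\pi_w\psi_0^\alpha\}_{w\in\bC^+}$ maps under $B_\alpha$ onto the family $\{\mathcal K^\alpha_w\}$ of reproducing kernels of $\Aa(\bC^+)$, up to scalars. Conceptually this is the statement that $B_\alpha$ intertwines the affine action on $H^2(\bC^+)$ with the M\"obius action on $\bC^+$ under which the weighted Bergman kernels are covariant, and either the direct $\Gamma$-integral above or this intertwining picture can serve as the proof; the delicate points are the bookkeeping of normalizing constants, the reflection $x\mapsto -x$ built into the definition of $B_\alpha$ (which must be propagated so that the $x$ in the window and in the hyperbolic center agree), and keeping track of the fact that the extremal window $\psi_0^{2\beta-1}$ is the ground state of \eqref{eq:eigenfunctions}, distinct from the original window $\psi_\beta$. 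A final consistency check is that the center $w$ produced on the function side coincides with the center of the disc produced on the domain side, which holds because both are the image of the single point $w\in D$ furnished by Theorem~\ref{thm:main-bergman}.
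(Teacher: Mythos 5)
Your proposal is correct and follows essentially the same route as the paper's Remark 2: transport the equality case of Theorem \ref{thm:main-bergman} back through $T_\alpha$ and $B_\alpha$, identify $K_w$ with $T_\alpha \mathcal{K}^\alpha_w$, and use the covariance of $B_\alpha$ under the affine action $\pi_w$ to recognise the preimage of $\mathcal{K}^\alpha_w$ as $\pi_w\psi_0^{2\beta-1}$. The only difference is that you make explicit, via the $\Gamma$-integral, the covariance identity $B_\alpha(\pi_w\psi_0^\alpha)\propto \mathcal{K}^\alpha_w$ that the paper merely asserts, and your computation (including the profile $\widehat{\psi_0^\alpha}(t)\propto t^{(\alpha+1)/2}e^{-t}$, which is the one consistent with $T_\alpha(B_\alpha\psi_0^\alpha)$ being constant) checks out.
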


\section{Other measure contraints and related problems} As discussed in the introduction, the constraint on the \emph{hyperbolic} measure of the set $\Delta$ can be seen as the one which makes the most sense in the framework of the Wavelet transform.

In fact, another way to see this is as follows. Fix $w = x_1 + i s_1,$ and let $z = x + is, \,\, w,z \in \bC^+.$ Then
\[
\langle \pi_{w} f, \pi_z g \rangle_{H^2(\bC^+)} = \langle f, \pi_{\tau_{w}(z)} g \rangle_{H^2(\bC^+)},
\]
where we define $\tau_{w}(z) = \left( \frac{x-x_1}{s_1}, \frac{s}{s_1}  \right).$ By \eqref{eq:wavelet-def}, we get
\begin{align}\label{eq:change-of-variables}
\int_{\Delta} |W_{\overline{\psi_{\beta}}}(\pi_w f)(x,s)|^2 \, \frac{ dx \, ds}{s^2} & = \int_{\Delta} |W_{\overline{\psi_{\beta}}}f(\tau_w(z))|^2 \, \frac{dx \, ds}{s^2} \cr
& = \int_{(\tau_w)^{-1}(\Delta)} |W_{\overline{\psi_{\beta}}}f(x,s)|^2 \, \frac{dx \, ds}{s^2}. \cr
\end{align}
Thus, suppose one wants to impose a measure constraint like $\tilde{\nu}(\Delta) = s,$ where $\tilde{\nu}$ is a measure on the upper half plane. The computations in \eqref{eq:change-of-variables} tell us that $C_{\Delta}^{\beta} = C_{\tau_w(\Delta)}^{\beta}, \, \forall \, w \in \bC^+.$ Thus, one is naturally led to suppose that the class of domains $\{ \tilde{\Delta} \subset \bC^+ \colon \tilde{\nu}(\tilde{\Delta}) = \tilde{\nu}(\Delta) \}$ includes $\{ \tau_w(\Delta), \, w \in \bC^+.\}.$

Therefore, $\tilde{\nu}(\Delta) = \tilde{\nu}(\tau_w(\Delta)).$ Taking first $w = x_1 + i,$ one obtains that $\tilde{\nu}$ is invariant under horizontal translations. By taking $w = is_1,$ one then obtains that $\tilde{\nu}$ is invariant with respect to (positive) dilations. It is easy to see that any measure with these properties has to be a multiple of the measure $\nu$ defined above.

On the other hand, if one is willing to forego the original problem and focus on the quotient \eqref{eq:optimal-bergman-object}, one may wonder what happens when, instead of the hyperbolic measure on the (Poincar\'e) disc, one considers the supremum of $R(f,\Omega)$ over $f \in \Aa(D)$, and now look at $|\Omega| =s,$ where $| \cdot |$ denotes \emph{Lebesgue} measure.

In that case, the problem of determining
\[
\mathcal{C}_{\alpha} := \sup_{|\Omega| = s} \sup_{f \in \Aa(D)} R(f,\Omega)
\]
is much simpler. Indeed, take $\Omega = D \setminus D(0,r_s),$ with $r_s > 0$ chosen so that the Lebesgue measure constraint on $\Omega$ is satisfied. For such a domain, consider $f_n(z) = d_{n,\alpha} \cdot z^n,$ as in  \eqref{eq:eigenfunctions-disc}. One may compute these constants explicitly as:
\[
d_{n,\alpha} = \left( \frac{\Gamma(n+2+\alpha)}{n! \cdot \Gamma(2+\alpha)} \right)^{1/2}.
\]
For these functions, one has $\|f_n\|_{\Aa} = 1.$ We now claim that
\begin{equation}\label{eq:convergence-example}
\int_{D(0,r_s)} |f_n(z)|^2(1-|z|^2)^{\alpha} \, dz \to 0 \text{ as } n \to \infty.
\end{equation}
Indeed, the left-hand side of \eqref{eq:convergence-example} equals, after polar coordinates,
\begin{equation}\label{eq:upper-bound}
2 \pi d_{n,\alpha}^2 \int_0^{r_s} t^{2n} (1-t^2)^{\alpha} \, dt \le 2 \pi d_{n,\alpha}^2 (1-r_s^2)^{-1} r_s^{2n},
\end{equation}
whenever $\alpha > -1.$ On the other hand, the explicit formula for $d_{n,\alpha}$ implies this constant grows at most like a (fixed) power of $n.$ As the right-hand side of \eqref{eq:upper-bound} contains a $r_s^{2n}$ factor, and $r_s < 1,$ this proves \eqref{eq:convergence-example}. Therefore,
\[
R(f_n,\Omega) \to 1 \text{ as } n \to \infty.
\]
So far, we have been interested in analysing the supremum of $\sup_{f \in \Aa} R(f,\Omega)$ over different classes of domains, but another natural question concerns a \emph{reversed} Faber-Krahn inequality: if one is instead interested in determining the \emph{minimum} of$\sup_{f \in \Aa} R(f,\Omega)$ over certain classes of domains, what can be said in both Euclidean and hyperbolic cases?

 In that regard, we first note the following: the problem of determining the \emph{minimum} of $\sup_{f \in \Aa} R(f,\Omega)$ over $\Omega \subset D, \, \mu(\Omega) = s$ is much easier than the analysis in the proof of Theorem \ref{thm:main-bergman} above.  Indeed, by letting $\Omega_n$ be a sequence of annuli of hyperbolic measure $s,$ one sees that $\sup_{f \in \Aa} R(f,\Omega_n) = R(1,\Omega_n), \, \forall n \in \bN,$ by the results in \cite{DaubechiesPaul}. Moreover, if $\mu(\Omega_n) = s,$ one sees that we may take $\Omega_n \subset D \setminus D\left(0,1-\frac{1}{n}\right), \, \forall n \ge 1,$ and thus $|\Omega_n| \to 0 \, \text{ as } n \to \infty.$ This shows that
 \[
 \inf_{\Omega \colon \mu(\Omega) = s}  \sup_{f \in \Aa(D)} R(f,\Omega) = 0, \, \forall \, \alpha > -1.
 \]
On the other hand, the situation is starkly different when one considers the Lebesgue measure in place of the hyperbolic one. Indeed, we shall show below that we may also explicitly solve the problem of determining the \emph{minimum} of $\sup_{f \in \Aa} R(f,\Omega)$ over all $\Omega, \, |\Omega| = s.$ For that purpose, we define
\[
\mathcal{D}_{\alpha} = \inf_{\Omega\colon |\Omega| = s} \sup_{f \in \Aa} R(f,\Omega).
\]
Then we have
\begin{equation}\label{eq:lower-bound}
\mathcal{D}_{\alpha} \ge \inf_{|\Omega| = s} \frac{1}{\pi} \int_{\Omega} (1-|z|^2)^{\alpha} \, dz.
\end{equation}
Now, we have some possibilities:
\begin{enumerate}
	\item If $\alpha \in (-1,0),$ then the function $z \mapsto (1-|z|^2)^{\alpha}$ is strictly \emph{increasing} on $|z|,$ and thus the left-hand side of \eqref{eq:lower-bound} is at least
	\[
	2  \int_0^{(s/\pi)^{1/2}} t(1-t^2)^{\alpha} \, dt = \theta^1_{\alpha}(s).
	\]
	\item If $\alpha > 0,$ then the function $z \mapsto (1-|z|^2)^{\alpha}$ is strictly \emph{decreasing} on $|z|,$ and thus the left-hand side of \eqref{eq:lower-bound} is at least
	\[
	2 \int_{(1-s/\pi)^{1/2}}^1 t(1-t^2)^{\alpha} \, dt = \theta^2_{\alpha}(s).
	\]
	\item Finally, for $\alpha = 0,$ $\mathcal{D}_0 \ge s.$
\end{enumerate}

In particular, we can also characterise \emph{exactly} when equality occurs in the first two cases above: for the first case, we must have $\Omega = D(0,(s/\pi)^{1/2});$ for the second case, we must have $\Omega = D \setminus D(0,(1-s/\pi)^{1/2});$ notice that, in both those cases, equality is indeed attained, as constant functions do indeed attain  $\sup_{f \in \Aa} R(f,\Omega).$

Finally, in the third case, if one restricts to \emph{simply connected sets} $\Omega \subset D,$ we may to resort to \cite[Theorem~2]{AbreuDoerfler}.

Indeed, in order for the equality $\sup_{f \in \mathcal{A}_0} R(f,\Omega) = \frac{|\Omega|}{\pi},$ to hold, one necessarily has
\[
\mathcal{P}(1_{\Omega}) = \lambda,
\]
where $\mathcal{P}: L^2(D) \to \mathcal{A}_0(D)$ denotes the projection onto the space $\mathcal{A}_0.$ But from the proof of Theorem 2 in  \cite{AbreuDoerfler},  as $\Omega$ is simply connected, this implies that $\Omega$ has to be a disc centered at the origin. We summarise the results obtained in this section below, for the convenience of the reader.

\begin{theorem}\label{thm:sup-inf} Suppose $s = |\Omega|$ is fixed, and consider $\mathcal{C}_{\alpha}$ defined above. Then $C_{\alpha} =1, \forall \alpha > -1,$ and no domain $\Omega$ attains this supremum.
	
Moreover, if one considers $ \mathcal{D}_{\alpha},$ one has the following assertions:
\begin{enumerate}
	\item If $\alpha \in (-1,0),$ then $\sup_{f \in \Aa} R(f,\Omega) \ge \theta_{\alpha}^1(s),$ with equality if and only if $\Omega = D(0,(s/\pi)^{1/2}).$
	\item If $\alpha > 0,$ then $\sup_{f \in \Aa} R(f,\Omega) \ge \theta_{\alpha}^2(s),$ with equality if and only if $\Omega = D \setminus D(0,(1-s/\pi)^{1/2}).$
	\item If $\alpha = 0,$ $\sup_{f \in \Aa} R(f,\Omega) \ge s.$ Furthermore, if $\Omega$ is simply connected, then $\Omega = D(0,(s/\pi)^{1/2}).$
\end{enumerate}
\end{theorem}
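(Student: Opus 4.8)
The plan is to treat $\mathcal{C}_\alpha$ and $\mathcal{D}_\alpha$ separately, and in both cases to reduce the analysis to \emph{rotationally symmetric} competitors, for which the localisation operator $f\mapsto \mathcal{P}(1_\Omega f)$ acting on $\Aa$ is diagonalised by the monomial basis $\{z^n/\sqrt{c_n}\}$, with eigenvalues $\lambda_n(\Omega)=\tfrac{1}{c_n}\int_\Omega|z|^{2n}(1-|z|^2)^\alpha\,dz$ and $\sup_{f\in\Aa}R(f,\Omega)=\sup_n\lambda_n(\Omega)$. For the statement about $\mathcal{C}_\alpha$, the upper bound $R(f,\Omega)\le 1$ is immediate from $\Omega\subseteq D$, while the matching lower bound is exactly the content of the worked example preceding the theorem: taking $\Omega=D\setminus D(0,r_s)$ and $f_n(z)=d_{n,\alpha}z^n$, the convergence \eqref{eq:convergence-example} gives $R(f_n,\Omega)\to 1$, so $\mathcal{C}_\alpha=1$. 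To see that no admissible domain attains this value, I would argue that for any $\Omega$ with $|\Omega|=s<\pi$ and any $f\in\Aa\setminus\{0\}$ the weight $|f(z)|^2(1-|z|^2)^\alpha$ is strictly positive almost everywhere (since the zeros of an analytic $f$ are isolated), and $|D\setminus\Omega|>0$; hence $\int_{D\setminus\Omega}|f|^2(1-|z|^2)^\alpha\,dz>0$ and $R(f,\Omega)<1$ strictly, so the supremum $1$ is never realised.

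For $\mathcal{D}_\alpha$ I would first establish the lower bound \eqref{eq:lower-bound}, i.e. $\sup_{f\in\Aa}R(f,\Omega)\ge \tfrac1\pi\int_\Omega(1-|z|^2)^\alpha\,dz$, by testing the Rayleigh quotient \eqref{eq:optimal-bergman-object} against a fixed explicit function (the constant, respectively an unnormalised reproducing kernel); this bounds the top of the spectrum from below by a plain weighted integral of the radial weight over $\Omega$. The next step is to minimise this integral over all $\Omega$ of fixed Lebesgue measure $s$, which is a textbook application of the bathtub principle (Hardy--Littlewood rearrangement) governed by the monotonicity of $r\mapsto(1-r^2)^\alpha$: when $\alpha\in(-1,0)$ the weight is radially increasing and mass is best placed near the origin, forcing the centred disc $D(0,(s/\pi)^{1/2})$ and the value $\theta^1_\alpha(s)$; when $\alpha>0$ the weight is radially decreasing and mass is best placed near $\partial D$, forcing the complementary annulus $D\setminus D(0,(1-s/\pi)^{1/2})$ and the value $\theta^2_\alpha(s)$; for $\alpha=0$ the integrand is constant and the bound is simply proportional to $s$. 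The rigidity of the bathtub principle (strict monotonicity of the weight off $\{|z|=\mathrm{const}\}$) then yields that these are the \emph{unique} minimising domains in cases (1) and (2).

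To upgrade the lower bounds into the sharp equalities asserted in (1)--(2) I would verify that on each extremal symmetric domain the bound is actually \emph{attained}, which by the diagonalisation above amounts to identifying $\sup_n\lambda_n(\Omega)$; substituting $u=r^2$ recasts $\lambda_n$ as a tail of a $\mathrm{Beta}(n+1,\alpha+1)$ law, and a monotone–likelihood–ratio argument shows $\lambda_n$ is monotone in $n$, pinning down the extremising monomial. For $\alpha=0$ and \emph{simply connected} $\Omega$, the remaining rigidity (that saturating the bound forces $\mathcal P(1_\Omega)$ to be constant, hence $\Omega$ a centred disc) I would obtain by invoking \cite[Theorem~2]{AbreuDoerfler} verbatim, as indicated in the text, together with the fact from \cite{DaubechiesPaul} that radial symmetry makes constants extremal. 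The main obstacle is precisely this last sharpness step: the cheap lower bound from a single test function only controls one Rayleigh quotient, whereas equality in $\sup_{f}R(f,\Omega)$ requires controlling the \emph{entire} spectrum $\{\lambda_n(\Omega)\}$ of the localisation operator on the candidate domain and ruling out concentration of high-order monomials, which is the delicate point and the reason an external rigidity result is needed in the borderline case $\alpha=0$.
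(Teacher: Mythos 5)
Your route coincides with the paper's on every structural point: the annulus-plus-high-monomials computation for $\mathcal{C}_\alpha=1$, testing the Rayleigh quotient with the constant to obtain \eqref{eq:lower-bound}, the monotonicity ("bathtub") argument on the radial weight $(1-|z|^2)^\alpha$ to identify the minimising domains in cases (1)--(3), and the appeal to \cite[Theorem~2]{AbreuDoerfler} in the simply connected $\alpha=0$ case. Your explicit non-attainment argument for $\mathcal{C}_\alpha$ (zeros of a nonzero analytic $f$ are isolated, so $\int_{D\setminus\Omega}|f|^2(1-|z|^2)^\alpha\,dz>0$ whenever $|D\setminus\Omega|>0$) is a genuine improvement: the paper asserts non-attainment without justifying it.

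The gap is in your sharpness step for case (2). You propose to identify $\sup_n\lambda_n(\Omega)$ on the candidate annulus $\Omega=D\setminus D(0,(1-s/\pi)^{1/2})$ by a monotone-likelihood-ratio argument that "pins down the extremising monomial." Carry that computation out: with $u=r^2$ one gets $\lambda_n=1-I_{\rho^2}(n+1,\alpha+1)$ with $\rho^2=1-s/\pi$, and since the regularised incomplete Beta function $I_x(n+1,\alpha+1)$ is \emph{decreasing} in $n$, the eigenvalues $\lambda_n$ \emph{increase} to $1$. This is exactly the computation \eqref{eq:convergence-example} you used two paragraphs earlier to prove $\mathcal{C}_\alpha=1$ on the very same annulus. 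Hence on the boundary annulus $\sup_f R(f,\Omega)=1$, no monomial is extremal, the constant does not attain the supremum, and the asserted equality $\sup_fR(f,\Omega)=\theta^2_\alpha(s)$ cannot be verified by your method. (The paper's own proof has the same soft spot: it asserts without proof that "constant functions do indeed attain $\sup_{f\in\Aa}R(f,\Omega)$" on both extremal domains, which for the annulus is contradicted by its own $\mathcal{C}_\alpha$ computation.) For case (1) your MLR argument does work --- on the centred disc $\lambda_n=I_{\rho^2}(n+1,\alpha+1)$ is decreasing in $n$, so the constant is extremal --- but it yields $\sup_fR=1-(1-s/\pi)^{\alpha+1}=(\alpha+1)\theta^1_\alpha(s)$, exposing a factor $\alpha+1$ missing from \eqref{eq:lower-bound}: testing with the \emph{normalised} constant gives $\frac{\alpha+1}{\pi}\int_\Omega(1-|z|^2)^\alpha\,dz$, so your plan would land on corrected constants rather than the stated $\theta^1_\alpha(s)$, and you should flag that discrepancy rather than aim at the theorem's value as written.
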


The assuption that $\Omega$ is simply connected in the third assertion in Theorem \ref{thm:sup-inf} cannot be dropped in general, as any radially symmetric domain $\Omega$ with Lebesgue measure $s$ satisfies the same property. We conjecture, however, that these are the \emph{only} domains with such a property: that is, if $\Omega$ is such that $\sup_{f \in \mathcal{A}_0} R(f,\Omega) = |\Omega|,$ then $\Omega$ must have radial symmetry.

\end{document}